\documentclass[preprint,12pt]{elsarticle}
\biboptions{sort&compress}
\usepackage[T1]{fontenc}
\usepackage[utf8]{inputenc}
\usepackage{lmodern}
\usepackage{amsmath,amssymb,mathtools,amsthm}
\newtheorem{theorem}{Theorem}

\newtheorem{lemma}{Lemma}
\newtheorem{proposition}{Proposition}
\usepackage{graphicx,booktabs,array,adjustbox,multirow}
\usepackage{etoolbox}
\usepackage[ruled,vlined]{algorithm2e}
\usepackage[section]{placeins}
\usepackage{setspace,needspace}
\usepackage[font=small,labelfont=bf]{caption}
\AtBeginEnvironment{figure}{\setstretch{1}}
\AtBeginEnvironment{table}{\setstretch{1}}
\AtBeginEnvironment{algorithm}{\setstretch{1}}
\usepackage[hidelinks]{hyperref}
\newcommand{\eps}{\varepsilon}
\newcommand{\dd}{\,\mathrm{d}}
\journal{Journal of Computational Physics}
\begin{document}
\begin{frontmatter}
\title{Constrained weak identification of Allen--Cahn free energies with surface-tension calibration}
\author[kentech]{Soobeen Jung}
\ead{sbeenjung@kentech.ac.kr}
\author[kentech]{Hyunju Kim\corref{cor1}}
\ead{hjkim@kentech.ac.kr}
\address[kentech]{Department of Energy Engineering, Korea Institute of Energy Technology, Naju
58330, Republic of Korea}
\cortext[cor1]{Corresponding author. Tel.: +82-61-320-9240.}
\begin{abstract}
Phase-field trajectories governed by the Allen--Cahn equation determine the effective force $G=F'/\eps^2$, but not the free-energy potential $F$ and interfacial scale $\eps$ separately. We introduce surface-tension-calibrated Allen--Cahn identification (STAC), a two-step method that resolves this ambiguity using an independently supplied surface tension. First, space--time weak moments produce a linear inverse problem without differentiating the measured field. A Bernstein representation with positive coefficients enforces two stable pure phases and a single central energy maximum. Second, the surface tension fixes the remaining scale algebraically. We prove scale equivalence of the forward model, uniqueness of the calibrated pair within this class, and finite perturbation bounds that remain valid when the potential vanishes at the pure phases. Experiments with resolved synthetic phase fields separate the effects of the structural constraint, ridge regularization, and constitutive approximation error. Across 80 shared noisy records, unconstrained degree-five regression produces six potentials with an incorrect well structure. Positivity combined with ridge regularization eliminates these violations and reduces the mean force error from 8.70\% to 5.66\%. Lower-degree models are nevertheless more accurate in some tests, and an admissible law outside the positive Bernstein family shows how the constraint can introduce approximation bias. STAC thus recovers a structurally admissible, physically scaled potential while making explicit which information comes from the observed dynamics and which comes from the independent surface-tension datum.

\end{abstract}
\begin{keyword}
Allen--Cahn equation \sep Constitutive identification \sep Surface tension \sep Weak formulation
\sep Inverse problems
\end{keyword}
\end{frontmatter}
\raggedbottom
\section{Introduction}
\label{sec:introduction}
Time-resolved phase-field images contain information about the constitutive laws that drive interface motion. In the Allen--Cahn equation, the bulk potential controls local phase stability, while the gradient term penalizes spatial variation \cite{Cahn1958,AllenCahn1979}. Their balance also connects diffuse-interface evolution to motion by mean curvature in the appropriate small-interface-width limit \cite{Chen1992,EvansSonerSouganidis1992,Modica1987}. We consider the inverse problem of recovering a symmetric free-energy potential and its interfacial scale from resolved phase-field observations. A physically interpretable reconstruction must first establish which combination of these quantities the observations actually determine.

The forward model has an exact scale ambiguity. For known diffusion coefficient $q$, the evolution depends on $F$ and $\eps$ only through the effective force
$G=F'/\eps^2$. The transformation
$(\eps,F)\mapsto(s\eps,s^2F)$, with $s>0$, leaves $G$ and hence every solution of the governing equation unchanged. More accurate or more densely sampled trajectories cannot distinguish members of this scale class, and neither can the corresponding equilibrium interface profile. An independent energy datum is therefore needed to recover an absolute scale. We use the planar surface tension in the normalization
$\sigma(F)=\int_{-1}^{1}\sqrt{2F(u)}\dd u$.
Once $G$ has been identified and the pure-phase energy has been set to zero, this datum determines $\eps$ and $F$ algebraically.

Constitutive inference from evolving patterns has been approached through optimization constrained by partial differential equations (PDEs) \cite{Zhao2020Pattern,Zhao2021ImageInversion}, variational system identification \cite{Wang2019VSI,Wang2021VSI}, and sparse regression \cite{Brunton2016SINDy,Rudy2017PDE}. Zhao, Braatz and Bazant \cite{Zhao2021ImageInversion} study how imaging conditions and physical constraints affect the recovery and uncertainty of constitutive functions. Weak formulations improve robustness to measurement noise by transferring derivatives from the observations to test functions \cite{Messenger2021WSINDy,Stephany2024WeakPDELearn}. Neural approaches include physics-informed neural networks \cite{Raissi2019PINN}, pseudo-spectral physics-informed neural networks (SPINN) \cite{Zhao2020SPINN}, and phase-field weak-form neural networks (PFWNN) \cite{Bao2025PFWNN}. Free-energy representations and differentiable evolution models offer further routes to constitutive inference \cite{Teichert2019IDNN,Cohen2026Differentiable}. These methods provide different estimators and representations, but no method using only the same trajectory data can recover a scale that is absent from the forward dynamics.

Scaling invariances also arise in broader parameter-identification problems \cite{Yates2009SymmetryIdentifiability,KianLiimatainenLin2024Gauge}. For the Cahn--Hilliard equation, Brunk, Egger and Habrich \cite{BrunkEggerHabrich2023} analyze uniqueness up to scaling and stable estimation by a regularized equation-error method. Uniqueness and numerical reconstruction results for phase-field potentials are available under other observation settings \cite{NiLai2024Uniqueness,NiLai2026DTPFI}. A second issue is structural: an estimated force may fit the data while producing the wrong number or type of stationary points. Structural information can be introduced through prior-informed weak dictionaries \cite{TangLiuWang2026PriorIDENT} or structure-preserving neural dynamical models \cite{LeeTraskStinis2022}, and interfacial energies have been used to determine parameters in prescribed phase-field potentials \cite{Balashov2026Tuning,TenEikelderBrunk2026Calibration,Wang2026Ferroelectric}. The present problem requires both a shape-preserving constitutive estimate and a separate choice of physical scale.

We address these requirements with surface-tension-calibrated Allen--Cahn identification (STAC). STAC separates the inverse problem into two operations. First, space--time weak moments yield a linear regression for the effective force without differentiating the measured field. Positive coefficients in a Bernstein representation impose the prescribed symmetric double-well structure through a convex least-squares problem. Second, one independently supplied surface tension selects the physical member of the scale class. The surface tension is an input to this calibration; it is not inferred from the trajectory used to estimate $G$.

The main contributions are:
\begin{enumerate}
\item an exact characterization of the scale equivalence in the Allen--Cahn forward model and a proof that one positive surface-tension datum uniquely selects $(\eps,F)$ within that class;
\item a constrained weak estimator whose accepted solutions retain two stable pure phases and a single central energy maximum;
\item finite calibration bounds for simultaneous perturbations in the recovered primitive and surface tension, including the physically relevant case in which the potential vanishes at the pure phases; and
\item controlled comparisons on identical weak systems that separate structural positivity from ridge regularization and approximation bias. Across 80 shared noisy records, positivity with ridge regularization removes six degree-five structural violations and lowers the mean force error from 8.70\% to 5.66\%. The comparisons also identify cases in which degree two is more accurate and an outside-family law for which the constraint adds bias.
\end{enumerate}

STAC requires resolved phase values, known diffusion, and a surface tension expressed in consistent energy units; interface contours alone are insufficient. The numerical study uses synthetic observations and treats constitutive degree as a prescribed modeling choice because the tested selection procedures were not reliable across all laws and noise levels. Section~\ref{sec:math_background} develops the identifiability and calibration results, Section~\ref{sec:methodology} presents the estimator, and Section~\ref{sec:component_results} analyzes recovery, structural failures, and sensitivity to the observations and supplied datum. Section~\ref{sec:conclusions} summarizes the findings and limitations. Material with an \textsf{S} prefix belongs to the Supplementary Material.

\section{Identifiability and surface-tension calibration}
\label{sec:math_background}
\subsection{Energy, mobility, and the identifiable force}
We consider the Allen--Cahn equation associated with the nondimensional energy
\begin{equation}
 E_\eps[u]=\int_\Omega\left(\frac{\eps}{2}|\nabla
 u|^2+\frac{F(u)}{\eps}\right)\dd\boldsymbol{x},
\label{eq:energy}
\end{equation}
where $\Omega\subset\mathbb R^2$, $\eps>0$, and the generating potential is smooth and even. Its
pure phases satisfy
\[
 F(-1)=F(1)=0,\qquad F'(-1)=F'(1)=0,\qquad
 F(u)>0\quad(-1<u<1).
\]
We fix the pure-phase energy at zero because the evolution equation contains only the derivative
of $F$.

Under periodic or homogeneous Neumann boundary conditions, the model is
\begin{equation*}
 u_t=-\frac{q}{\eps}\frac{\delta E_\eps}{\delta u}
 =q\left(\Delta u-\frac{F'(u)}{\eps^2}\right),\qquad q>0,
\end{equation*}
with a prescribed initial field. Here, $q$ is the diffusion coefficient and $q/\eps$ is the
$L^2$ mobility for $E_\eps$. For smooth solutions, $\dd E_\eps/\dd t=-(\eps/q)\int_\Omega
|u_t|^2\dd\boldsymbol{x}\leq0$. We set $q=1$ in the numerical experiments. The Allen--Cahn
equation does not conserve the spatial integral of $u$. An equivalent energy normalization is
\[
\mathcal E_\eps[u]=\eps E_\eps[u]
 =\int_\Omega\left(\frac{\eps^2}{2}|\nabla u|^2+F(u)\right)\dd\boldsymbol{x}
\]
with mobility $q/\eps^2$. The interfacial-energy datum must use the same normalization as the
calibration formula. Define
\[
G(u)=\frac{F'(u)}{\eps^2},\qquad
 H(u)=\int_{-1}^{u}G(z)\dd z=\frac{F(u)}{\eps^2}.
\]
\begin{proposition}[Scale equivalence and structural non-identifiability]
\label{thm:scale_equivalence}
Fix $q>0$ and let $I=[-1,1]$. For $\eps>0$ and $F\in C^1(I)$ with $F(-1)=0$, define
$\mathcal N(u;\eps,F)=u_t-q(\Delta u-F'(u)/\eps^2)$.
For every $s>0$ and every sufficiently regular $I$-valued field, $\mathcal
N(u;s\eps,s^2F)=\mathcal N(u;\eps,F)$.
Moreover, two normalized pairs have the same effective force on all of $I$ if and only if they
belong to the same equivalence class $[(\eps,F)]=\{(s\eps,s^2F):s>0\}$. Observations governed by
this operator therefore cannot distinguish members of a class under the same initial and
boundary data.
\end{proposition}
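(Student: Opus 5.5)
The plan has three parts, taken in order: the invariance identity, the two implications in the characterization of the class, and the indistinguishability statement. Only the second part needs a real argument.

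\emph{Invariance.} Substitute $(s\eps,s^2F)$ into $\mathcal N$. Since $(s^2F)'=s^2F'$, we get
\[
\frac{(s^2F)'(u)}{(s\eps)^2}=\frac{F'(u)}{\eps^2}=G(u)
\]
pointwise on $I$, so the two residuals agree identically for any $I$-valued field with enough regularity for $u_t$ and $\Delta u$ to make sense. The scaled pair is also normalized, because $s^2F(-1)=0$. This shows that the map $(\eps,F)\mapsto G$ is constant on each class $[(\eps,F)]$, which is the ``if'' direction of the characterization.

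\emph{Converse.} Suppose two normalized pairs $(\eps_1,F_1)$ and $(\eps_2,F_2)$ satisfy $F_1'/\eps_1^2=F_2'/\eps_2^2$ on $I$. Integrate from $-1$ to $u$ and use $F_1(-1)=F_2(-1)=0$:
\[
\frac{F_1(u)}{\eps_1^2}=H(u)=\frac{F_2(u)}{\eps_2^2}\quad\text{for all }u\in I.
\]
Set $s=\eps_2/\eps_1>0$. Then $\eps_2=s\eps_1$ and $F_2=(\eps_2^2/\eps_1^2)F_1=s^2F_1$, so $(\eps_2,F_2)\in[(\eps_1,F_1)]$. The normalization $F(-1)=0$ is exactly what removes the additive constant of integration; without it, $F_2$ could differ from $s^2F_1$ by a constant. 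The family $\{(s\eps,s^2F):s>0\}$ is indeed an equivalence class, since it is the orbit of the multiplicative group action $s\cdot(\eps,F)=(s\eps,s^2F)$. Reflexivity ($s=1$), symmetry ($1/s$) and transitivity (products) follow immediately.

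\emph{Indistinguishability.} This follows because two pairs in the same class define literally the same PDE operator $\mathcal N$, as shown in the first step. With identical initial and boundary data they therefore have the same solution set, so every observation functional of the trajectory, including the equilibrium profile, takes the same value. Under whatever well-posedness is assumed for the forward problem, this solution set is the same single solution. I expect no step to be a real obstacle. The only point requiring care is the constant of integration in the converse, and it is handled directly by the normalization hypothesis $F(-1)=0$.
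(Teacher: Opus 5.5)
Your proposal is correct and follows the paper's proof essentially verbatim. Invariance comes from $(s^2F)'/(s\eps)^2=F'/\eps^2$; the converse comes from integrating from $-1$ with $F_i(-1)=0$ and setting $s=\eps_2/\eps_1$; indistinguishability comes from identical operators having identical solution sets, and your extra checks (the scaled pair stays normalized, the classes are orbits of a group action) only make explicit what the paper leaves implicit.
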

\begin{proof}
The ratio $(s^2F)'/(s\eps)^2=F'/\eps^2$ proves the invariance. Conversely, if
$F_1'/\eps_1^2=F_2'/\eps_2^2$ on $I$, integration from $-1$ and the normalization $F_i(-1)=0$
give $F_2=(\eps_2/\eps_1)^2F_1$; set $s=\eps_2/\eps_1>0$. Identical operators have identical
solution sets.
\end{proof}
The converse concerns equality of forces throughout $I$, not equality of one observed
trajectory. A single record can fail to distinguish additional forces because of limited phase
coverage or poorly conditioned moments, so the proposition establishes an unavoidable scale
ambiguity rather than complete identifiability modulo that ambiguity. The energy normalization
is essential: without $F(-1)=0$, an additive constant is also invisible to the dynamics.
Identification below concerns $G$ in a prescribed space; $H=F/\eps^2$ and the equilibrium
profile are invariant within its scale class.

\subsection{Calibration from interfacial energy}
\label{sec:sigma_theory}
For a monotone planar equilibrium connecting the two stable phases, equipartition gives
$\tfrac{\eps}{2}(u')^2=F(u)/\eps$, so that $|u'|=\sqrt{2F(u)}/\eps$ and the interfacial energy
associated with Eq.~\eqref{eq:energy} is
\[
 \sigma=\int_{-1}^{1}\sqrt{2F(z)}\dd z
       =\eps\int_{-1}^{1}\sqrt{2H(z)}\dd z,
\]
the classical relation for the stated normalization \cite{Cahn1958,Modica1987}. For $\mathcal
E_\eps$ the corresponding interfacial energy is instead $\eps\sigma$.

\begin{proposition}[Unique surface-tension selection within a scale class]
\label{thm:unique_calibration}
Let $(\eps,F)$ satisfy the normalization of Proposition~\ref{thm:scale_equivalence}, with
$F\geq0$ and $0<\sigma(F)<\infty$. For any supplied $\sigma_{\rm obs}>0$, exactly one member of
its scale class has that tension, namely $s_*=\sigma_{\rm obs}/\sigma(F)$ and
$(\widehat\eps,\widehat F)=(s_*\eps,s_*^2F)$. Equivalently, given a nonnegative normalized
primitive $\widehat H$ with $0<C_{\widehat H}=\int_I\sqrt{2\widehat H}\dd u<\infty$, the unique
calibrated pair with that primitive and tension is
\begin{equation}
 \widehat\eps=\frac{\sigma_{\rm obs}}{C_{\widehat H}},
 \qquad \widehat F(u)=\widehat\eps^{\,2}\widehat H(u).
 \label{eq:epsilon_from_surface_tension}
\end{equation}
\end{proposition}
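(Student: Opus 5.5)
The plan rests on one observation: the surface-tension functional is homogeneous of degree one under the scaling that defines the equivalence class of Proposition~\ref{thm:scale_equivalence}. Since $F\ge 0$, we may take square roots pointwise. For every $s>0$,
\[
\sigma(s^2F)=\int_{-1}^{1}\sqrt{2s^2F(u)}\dd u=s\,\sigma(F),
\]
with all quantities finite because $0<\sigma(F)<\infty$. The class members are $(s\eps,s^2F)$, so selecting a member with tension $\sigma_{\rm obs}$ reduces to solving the scalar equation $s\,\sigma(F)=\sigma_{\rm obs}$ for $s>0$.

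\textbf{Existence and uniqueness of the scale.} The map $s\mapsto s\sigma(F)$ is strictly increasing and linear, and it maps $(0,\infty)$ onto $(0,\infty)$ because $\sigma(F)>0$. It therefore takes the value $\sigma_{\rm obs}>0$ exactly once, at $s_*=\sigma_{\rm obs}/\sigma(F)$. The corresponding member is $(\widehat\eps,\widehat F)=(s_*\eps,s_*^2F)$. This member still satisfies the normalization $\widehat F(-1)=0$ and $\widehat F\ge0$, so it lies in the admissible set. Uniqueness within the class also needs the class parametrization to be injective. This is immediate, since $s\mapsto s\eps$ is injective; alternatively, Proposition~\ref{thm:scale_equivalence} shows that every normalized pair with the same force has this form.

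\textbf{Equivalent primitive formulation.} Start from a nonnegative normalized primitive $\widehat H$. By Proposition~\ref{thm:scale_equivalence}, the normalized pairs sharing the force $\widehat H'$ are exactly those with $F=\eps^2\widehat H$, for $\eps>0$ arbitrary. Substituting into the tension formula of Section~\ref{sec:sigma_theory} gives
\[
\sigma(\eps^2\widehat H)=\eps\, C_{\widehat H}.
\]
Imposing $\eps C_{\widehat H}=\sigma_{\rm obs}$ with $0<C_{\widehat H}<\infty$ forces the unique value $\widehat\eps=\sigma_{\rm obs}/C_{\widehat H}$, and then $\widehat F=\widehat\eps^{\,2}\widehat H$, which is Eq.~\eqref{eq:epsilon_from_surface_tension}. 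Consistency with the first formulation follows by taking $\widehat H=F/\eps^2$. Then $C_{\widehat H}=\sigma(F)/\eps$, so $\sigma_{\rm obs}/C_{\widehat H}=s_*\eps$ and the two descriptions select the same pair.

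\textbf{Anticipated difficulty.} There is no real analytical obstacle. The only delicate point is bookkeeping. Nonnegativity is needed for the square root to be real, so that homogeneity $\sqrt{s^2F}=s\sqrt F$ holds. The hypotheses $0<\sigma<\infty$ are what exclude degenerate primitives, such as $H\equiv0$, for which no scale or every scale works. I would state these points explicitly rather than treat them as routine.
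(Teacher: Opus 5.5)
Your proposal is correct and matches the paper's proof: positive homogeneity $\sigma(s^2F)=s\,\sigma(F)$ and strict monotonicity give the unique $s_*=\sigma_{\rm obs}/\sigma(F)$, and for a fixed primitive every class member has the form $(a,a^2\widehat H)$ with tension $aC_{\widehat H}$, so solving $aC_{\widehat H}=\sigma_{\rm obs}$ yields Eq.~\eqref{eq:epsilon_from_surface_tension}. Your added bookkeeping (injectivity of $s\mapsto s\eps$ and the consistency check $C_{\widehat H}=\sigma(F)/\eps$) is not in the paper but is correct.
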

\begin{proof}
Positive homogeneity gives $\sigma(s^2F)=s\sigma(F)$, whose strict monotonicity proves existence
and uniqueness of $s_*$. For a fixed primitive every pair in its class has the form
$(a,a^2\widehat H)$ with tension $aC_{\widehat H}$; solving $aC_{\widehat H}=\sigma_{\rm obs}$
yields Eq.~\eqref{eq:epsilon_from_surface_tension}.
\end{proof}
For a recovered admissible primitive, the supplied surface tension fixes the remaining scale
uniquely. Calibration leaves the effective force unchanged, so trajectory agreement tests the
recovered dynamics without independently verifying the supplied energy scale. Uniqueness is
conditional on the primitive and does not establish its recovery from one trajectory.

The interface width cannot replace this datum. The width between two interior phase levels
$-1<a<b<1$ is $W_{a,b}=\int_a^b(2H)^{-1/2}\dd u$, which is unchanged by the scale
transformation; a formula $W_{a,b}=C\eps$ can determine $\eps$ only after the amplitude and
shape of $F$ have been fixed. For curved, evolving interfaces the planar formula is approximate,
but the invariance of the full equation remains exact.

\subsection{Finite calibration stability and endpoint sensitivity}
The calibration map involves a square root that degenerates at the pure phases, so a bound valid
for potentials vanishing at $\pm1$ is needed. Theorem~\ref{thm:finite_calibration} provides one
without a Taylor expansion and without requiring $\widetilde H/H$ to stay bounded near the
endpoints.
\begin{theorem}[Finite perturbations of the calibration map]
\label{thm:finite_calibration}
Let $H,\widetilde H\in L^\infty(I)$ be nonnegative almost everywhere, where $I=[-1,1]$, and set
$C_H=\int_I\sqrt{2H}\dd u$. Then $|C_{\widetilde H}-C_H|\leq 2\sqrt{2\|\widetilde
H-H\|_\infty}$.
Suppose also $C_H,C_{\widetilde H}>0$ and $\sigma,\widetilde\sigma>0$. Define $\eps=\sigma/C_H$,
$F=\eps^2H$ and their tilded counterparts. The exact relative identity is
\begin{equation}
 r_\eps:=\frac{\widetilde\eps}{\eps}
 =\frac{\widetilde\sigma}{\sigma}\frac{C_H}{C_{\widetilde H}}.
 \label{eq:exact_calibration_ratio}
\end{equation}
Write $D=\|\widetilde H-H\|_\infty$, $\rho=2\sqrt{2D}/C_H$, and
$\widetilde\sigma=(1+\delta)\sigma$, $\delta>-1$. If $\rho<1$, then
\begin{equation}
 \frac{1+\delta}{1+\rho}\leq r_\eps\leq\frac{1+\delta}{1-\rho},
 \qquad |r_\eps-1|\leq\frac{|\delta|+\rho}{1-\rho}.
 \label{eq:finite_scale_bound}
\end{equation}
For all such calibrated pairs, regardless of whether $\rho<1$,
\begin{equation}
 \|\widetilde F-F\|_\infty
 \leq\eps^2\left[r_\eps^2D+|r_\eps^2-1|\,\|H\|_\infty\right].
 \label{eq:finite_potential_bound}
\end{equation}
\end{theorem}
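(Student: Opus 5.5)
The argument rests on one elementary inequality: the square root is $\tfrac12$-Hölder on $[0,\infty)$, with $|\sqrt a-\sqrt b|\le\sqrt{|a-b|}$ for $a,b\ge0$. This avoids any Taylor expansion and so never divides by $H$ near the endpoints. To verify it, assume $a\ge b$; then $(\sqrt a-\sqrt b)^2\le(\sqrt a-\sqrt b)(\sqrt a+\sqrt b)=a-b$.

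Applied pointwise almost everywhere to $2\widetilde H$ and $2H$, the inequality gives $|\sqrt{2\widetilde H}-\sqrt{2H}|\le\sqrt{2D}$. Integrating over $I$, which has length $2$, yields $|C_{\widetilde H}-C_H|\le 2\sqrt{2D}$. Both integrals are finite because $H,\widetilde H\in L^\infty(I)$.

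For the scale ratio, the identity \eqref{eq:exact_calibration_ratio} follows directly from $\eps=\sigma/C_H$ and $\widetilde\eps=\widetilde\sigma/C_{\widetilde H}$, exactly as in Proposition~\ref{thm:unique_calibration}. Dividing the first bound by $C_H$ gives $1-\rho\le C_{\widetilde H}/C_H\le 1+\rho$. When $\rho<1$ the lower bound is positive, so taking reciprocals and multiplying by $\widetilde\sigma/\sigma=1+\delta>0$ gives the two-sided bound in \eqref{eq:finite_scale_bound}. For the absolute bound I would treat the two sides separately:
\[
r_\eps-1\le\frac{\delta+\rho}{1-\rho}\le\frac{|\delta|+\rho}{1-\rho},
\]
and
\[
1-r_\eps\le\frac{\rho-\delta}{1+\rho}\le\frac{|\delta|+\rho}{1-\rho},
\]
where the last step uses $1+\rho\ge1-\rho>0$.

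For the potential bound \eqref{eq:finite_potential_bound}, the plan is to write $\widetilde F-F=\widetilde\eps^2\widetilde H-\eps^2H$ and insert the cross term $\widetilde\eps^2 H$:
\[
\widetilde F-F=\widetilde\eps^{\,2}(\widetilde H-H)+(\widetilde\eps^{\,2}-\eps^2)H
=\eps^2\bigl[r_\eps^2(\widetilde H-H)+(r_\eps^2-1)H\bigr].
\]
The triangle inequality in $L^\infty$ then gives the claim directly. This step needs only $C_H,C_{\widetilde H}>0$ so that both pairs are defined, and it does not use $\rho<1$, which matches the statement.

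The only delicate point is the Hölder step at the start, since it is what removes the endpoint degeneracy. Everything after it is algebra together with attention to the signs in the reciprocal step, which is where the hypothesis $\rho<1$ enters.
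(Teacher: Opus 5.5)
Your proof is correct and follows the paper's argument step for step. Both use the pointwise inequality $|\sqrt a-\sqrt b|\le\sqrt{|a-b|}$ integrated over $I$, the reciprocal bound on $r_\eps=(1+\delta)/(1+a)$ with $|a|\le\rho$, and the decomposition $\widetilde F-F=\eps^2[r_\eps^2(\widetilde H-H)+(r_\eps^2-1)H]$ followed by the triangle inequality; your two-sided derivation of $|r_\eps-1|\le(|\delta|+\rho)/(1-\rho)$ just writes out a step the paper leaves implicit.
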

\begin{proof}
For $a,b\geq0$, $|\sqrt a-\sqrt b|\leq\sqrt{|a-b|}$, hence $|C_{\widetilde
H}-C_H|\leq\sqrt2\int_I|\widetilde H-H|^{1/2}\dd u\leq2\sqrt{2D}$. Equation~\eqref{eq:exact_calibration_ratio} follows directly from calibration. With $a=(C_{\widetilde H}-C_H)/C_H$, the first bound gives
$|a|\leq\rho$, and since $r_\eps=(1+\delta)/(1+a)$ both inequalities in
Eq.~\eqref{eq:finite_scale_bound} follow. Finally $\widetilde F-F=\eps^2[r_\eps^2(\widetilde
H-H)+(r_\eps^2-1)H]$, and the triangle inequality proves Eq.~\eqref{eq:finite_potential_bound}.
\end{proof}
The interval in Eq.~\eqref{eq:finite_scale_bound} follows by allowing the relative integral change $a$ to range over $[-\rho,\rho]$. Its endpoints need not be attainable by a given pair of admissible primitives: the preceding square-root estimate can be conservative. The theorem is not needed when only the datum varies, because $\widehat\eps$ is
then exactly proportional to $\sigma_{\rm obs}$; it is needed when the primitive varies as well,
and stability of the inverse scale additionally requires $C_H$ to be bounded away from zero.

Two consequences are used below. Within the positive finite family of
Section~\ref{sec:constitutive_family}, suppose the reference coefficients satisfy $g_{*,j}>0$
for every $j$, with $H_*=\sum_jg_{*,j}P_j>0$ on $(-1,1)$; let $\eta=\max_j|\widehat
g_j-g_{*,j}|/g_{*,j}$ and $\sigma_{\rm obs}=(1+\delta)\sigma_*$ with $\delta>-1$, and assume
$\eta<1$. Then
\begin{equation}
 \frac{1+\delta}{\sqrt{1+\eta}}\leq\frac{\widehat\eps}{\eps_*}\leq
 \frac{1+\delta}{\sqrt{1-\eta}},\qquad
 (1+\delta)^2\frac{1-\eta}{1+\eta}\leq\frac{\widehat F(u)}{F_*(u)}\leq
 (1+\delta)^2\frac{1+\eta}{1-\eta}
 \label{eq:multiplicative_envelope}
\end{equation}
on $(-1,1)$, and along endpoint-controlled perturbations the first-order variation is
\begin{equation}
 \frac{\delta\eps}{\eps}
 \simeq\frac{\delta\sigma}{\sigma}-\frac{\delta C_H}{C_H},
 \qquad
 \delta C_H=\int_{-1}^{1}\frac{\delta H(z)}{\sqrt{2H(z)}}\dd z.
 \label{eq:sigma_sensitivity}
\end{equation}
Both statements, with their hypotheses and proofs, are Corollaries~S1 and~S2 of the
Supplementary Material. They are deterministic envelopes, and coefficients close to zero can
make $\eta<1$ fail or leave the bounds uninformative. For fixed $H$, changing $\sigma$ to
$(1+\delta)\sigma$ multiplies $\eps$ by $1+\delta$ and $F$ by $(1+\delta)^2$, whereas replacing
$\widehat H$ by $\alpha H$ with an exact tension gives $\widehat\eps=\eps/\sqrt\alpha$ and
$\widehat F=F$. The errors in $G$, $F$, and $\eps$ must therefore be distinguished;
Eq.~\eqref{eq:new_errors_v10} defines the numerical metrics, and
Section~\ref{sec:calibration_results} tests Eq.~\eqref{eq:sigma_sensitivity} against an
independently computed interfacial energy. The datum must be supplied in the normalization of
Eq.~\eqref{eq:energy}. If the datum $\gamma_{\rm obs}$ instead refers to $\mathcal E_\eps$, the consistent relation is $\gamma_{\rm obs}=\widehat\eps^2 C_{\widehat H}$ and hence $\widehat\eps=\sqrt{\gamma_{\rm obs}/C_{\widehat H}}$. Inserting that datum into Eq.~\eqref{eq:epsilon_from_surface_tension} without changing the normalization gives an incorrect scale even if the trajectory agrees. Physical units and the dissipation identity of the calibrated
model are given in Section~S2.

\section{Weak identification and constrained estimation}
\label{sec:methodology}
The estimator has two stages. The effective force is first determined from the observed phase
fields by a constrained weak solve. The supplied surface tension then fixes the potential and
the interfacial scale algebraically. Algorithm~\ref{alg:method} summarizes the sequence.

\subsection{A positive constitutive family}\label{sec:constitutive_family}
For $j=0,\ldots,m$, define $B_j^{(m)}(u)=-u(1-u^2)\binom{m}{j}(u^2)^j(1-u^2)^{m-j}$ on $[-1,1]$
and $P_j^{(m)}(u)=\int_{-1}^u B_j^{(m)}(z)\dd z$. Here $m$ is the degree of the Bernstein
bracket, not the degree of the full force or potential. The bracket is the Bernstein basis of
degree $m$ in $u^2$, so it is nonnegative on the phase interval, forms a partition of unity
there, and admits exact degree elevation \cite{Farin2002}; the elevation operator is what lets
one fixed design serve every candidate degree in Section~\ref{sec:component_effects}. These
primitives satisfy $(P_j^{(m)})'=B_j^{(m)}$, $P_j^{(m)}(\pm1)=0$, and $P_j^{(m)}\geq0$ on
$[-1,1]$. Thus $G_{\boldsymbol g}=\sum_{j=0}^m g_jB_j^{(m)}$ and $H_{\boldsymbol g}=\sum_{j=0}^m
g_jP_j^{(m)}$; at a fixed degree we suppress the superscript. Fixed degrees two and five carry
the principal comparisons; the exploratory observation-based selection of
Section~\ref{sec:component_effects} considers degrees one through five, and degrees zero through
five appear in the approximation study of Section~\ref{sec:replication}.

Let $v=1-u^2$. At Bernstein degree two the representation is
\begin{equation}
 G_{\boldsymbol g}(u)=-u(1-u^2)\big[g_0(1-u^2)^2+2g_1u^2(1-u^2)+g_2u^4\big],
 \qquad g_j>0,
 \label{eq:positive_force}
\end{equation}
for $|u|\leq1$, with the exact primitive in Eq.~\eqref{eq:positive_potential},
\begin{equation}
 H_{\boldsymbol g}(u)=v^2\left[\frac{g_0v^2}{8}+g_1\left(\frac v3-\frac{v^2}{4}\right)
 +g_2\left(\frac14-\frac v3+\frac{v^2}{8}\right)\right].
 \label{eq:positive_potential}
\end{equation}
The force in Eq.~\eqref{eq:positive_force} has exactly one zero in $(-1,1)$, at $u=0$; its
primitive satisfies $H(\pm1)=H'(\pm1)=0$, $H>0$ for $-1<u<1$, $H''(\pm1)=2g_2>0$, and
$H''(0)=-g_0<0$. Hence the potential has two stable pure phases and one interior stationary
point, the central maximum. The same conclusions hold at every degree $m$ whenever all $g_j>0$;
the estimator imposes $\boldsymbol g\geq\ell\boldsymbol1$ with $\ell>0$ in Eq.~\eqref{eq:positive_ls}. We distinguish the nonnegative cone $C_0$ from the numerical feasible set $C_\ell=\{\boldsymbol g:g_j\geq\ell\}$, a translated orthant. The small positive lower bound ensures nondegenerate well curvatures. On the closed cone $\{\boldsymbol g\geq\boldsymbol0,\ \boldsymbol
g\neq\boldsymbol0\}$ the force still has its only interior zero at $u=0$ and the primitive is
positive on $(-1,1)$, because the bracket is positive for $0<u^2<1$ and every $P_j^{(m)}$ is
positive there; the lower bound $\ell>0$ additionally makes the curvatures $H''(0)<0$ and
$H''(\pm1)>0$ strict. This representation is a sufficient but restrictive condition for a
symmetric double well; coefficient positivity is not necessary for physical admissibility. For
example, with $z=u^2$,
\begin{equation}
 p(z)=(z-\tfrac12)^2+0.01,\qquad G_{\rm c}(u)=-u(1-u^2)p(u^2).
 \label{eq:counterexample}
\end{equation}
Since $p>0$, its zero-endpoint primitive has stable pure phases and one central maximum. Its
degree-two Bernstein coefficients are $(0.26,-0.24,0.26)$, and exact elevation to degree five
gives $(0.26,0.06,-0.04,-0.04,0.06,0.26)$. The force belongs to both unconstrained spaces but to
neither nonnegative cone, and multiplication by any positive reaction-amplitude factor preserves
this exclusion and its relative approximation error.

Figure~\ref{fig:v12_approximation} shows the resulting approximation error, which is independent
of observation noise and therefore quantifies the cost of the structural guarantee before any
data are involved. We call the best nonnegative-cone relative force error the
\emph{approximation floor}. For this admissible law it is 37.68\% at degree two and 5.55\% at
degree five, whereas the unconstrained space of the same degree represents the law exactly. At
degree two the cone cannot reproduce the interior oscillation of $G_{\rm c}$ near $u=\pm0.8$; at
degree five it nearly can. These function-space projections are computed with 80-point
Gauss--Legendre quadrature and checked at 160 points. They are an evaluation-only lower bound
for a strictly positive fit, which separates approximation capacity from inference; they do not
test recovery from a trajectory of this law, which Section~\ref{sec:cone_trajectory} addresses.

Two distinct assumptions are therefore involved. The first is that the law has the prescribed
symmetric double-well structure. The second is that the law is well represented inside the
nonnegative cone at the declared degree. Equation~\eqref{eq:counterexample} satisfies the first
and fails the second. The structural assumption does not guarantee an accurate approximation at a fixed degree. The reference-based floor is unavailable in an inverse application, so family adequacy must instead be assessed through physical information and sensitivity to the observations and the chosen representation. The trajectory tests in Section~\ref{sec:cone_trajectory} show that a small approximation floor alone does not guarantee accurate recovery.

\begin{figure}[tbp]\centering
\includegraphics[width=\linewidth]{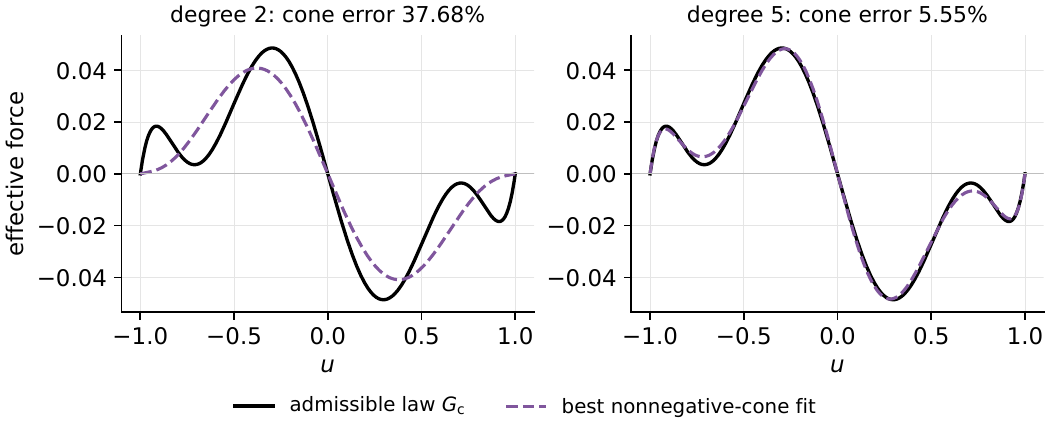}
\caption{Approximation limit of the nonnegative cone for the admissible law $G_{\rm c}$ of
Eq.~\eqref{eq:counterexample}. Left: Bernstein degree two; right: degree five. Solid line:
$G_{\rm c}$; dashed line: its best approximation with nonnegative Bernstein coefficients in
$L^2(-1,1)$. The panel titles give the relative $L^2$ force error of that approximation (the
approximation floor); the unconstrained space of each degree represents $G_{\rm c}$ exactly. No
trajectory data are involved.}\label{fig:v12_approximation}
\end{figure}

Outside the phase interval, with $x=\max(-1,\min(1,u))$, we define $H(u)=H(x)+g_m(u-x)^2$ and
$G(u)=G(x)+2g_m(u-x)$. This extension is twice continuously differentiable and its derivative is
the extended force; it is used when noisy observations take values outside $[-1,1]$.

\subsection{Space--time moments and their discrete evaluation}
\begin{lemma}[Space--time distributional identity]
\label{lem:weak_identity}
Let $\mathcal Q_T=\Omega\times(0,T)$ and $q>0$, and suppose $u,G(u)\in L^2(\mathcal Q_T)$. Then
$q^{-1}\partial_tu-\Delta u+G(u)=0$ in distributions if and only if, for every $\zeta\in
C_c^\infty(\mathcal Q_T)$,
\[
 \int_{\mathcal Q_T}G(u)\zeta\dd\boldsymbol x\dd t
 =\int_{\mathcal Q_T}u\left(\Delta\zeta+q^{-1}\partial_t\zeta\right)\dd\boldsymbol x\dd t.
\]
The identity extends to the closure $X_0$ of $C_c^\infty(\mathcal Q_T)$ in the test norm
$\|\zeta\|_X^2=\|\zeta\|_{L^2}^2+\|\partial_t\zeta\|_{L^2}^2+\|\Delta\zeta\|_{L^2}^2$.
\end{lemma}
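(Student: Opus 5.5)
The plan is to unfold the definition of the distributional derivative and then extend to $X_0$ by density. Because $u\in L^2(\mathcal Q_T)$, both $u$ and $G(u)$ are locally integrable and therefore define distributions on $\mathcal Q_T$. The distributions $\partial_t u$ and $\Delta u$ act on a test function $\zeta\in C_c^\infty(\mathcal Q_T)$ by
\[
\langle\partial_tu,\zeta\rangle=-\int_{\mathcal Q_T}u\,\partial_t\zeta\dd\boldsymbol x\dd t,
\qquad
\langle\Delta u,\zeta\rangle=\int_{\mathcal Q_T}u\,\Delta\zeta\dd\boldsymbol x\dd t.
\]
The second formula uses that the Laplacian is formally self-adjoint, so both spatial derivatives move onto $\zeta$ with no net sign change. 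Since $\zeta$ has compact support in the open cylinder, no boundary terms arise. For this reason the periodic or Neumann conditions play no role at this stage.

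Applying the distribution $q^{-1}\partial_tu-\Delta u+G(u)$ to $\zeta$ gives
\[
-\int_{\mathcal Q_T}u\bigl(q^{-1}\partial_t\zeta+\Delta\zeta\bigr)\dd\boldsymbol x\dd t+\int_{\mathcal Q_T}G(u)\zeta\dd\boldsymbol x\dd t .
\]
This expression vanishes for every $\zeta$ exactly when the stated moment identity holds. That establishes both directions of the equivalence at once, since a distribution is zero precisely when it annihilates every test function.

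For the extension, I would consider the linear functional
\[
\Lambda(\zeta)=\int G(u)\zeta-\int u\bigl(\Delta\zeta+q^{-1}\partial_t\zeta\bigr).
\]
By Cauchy--Schwarz it satisfies
\[
|\Lambda(\zeta)|\le\|G(u)\|_{L^2}\|\zeta\|_{L^2}+\|u\|_{L^2}\bigl(\|\Delta\zeta\|_{L^2}+q^{-1}\|\partial_t\zeta\|_{L^2}\bigr)\le C\|\zeta\|_X,
\]
with $C$ depending on $q$ and the two $L^2$ norms. So $\Lambda$ is bounded with respect to the test norm. Now take $\zeta\in X_0$ and a sequence $\zeta_n\in C_c^\infty$ with $\zeta_n\to\zeta$ in $\|\cdot\|_X$. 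Along this sequence $\zeta_n$, $\partial_t\zeta_n$ and $\Delta\zeta_n$ converge in $L^2$ to limits that serve as the meaning of $\partial_t\zeta$ and $\Delta\zeta$ for an element of the closure. Then $\Lambda(\zeta)=\lim\Lambda(\zeta_n)=0$.

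The main point needing care is making this last step well defined. An element of $X_0$ must have well-defined $\partial_t\zeta$ and $\Delta\zeta$, independent of the approximating sequence. I would handle this by noting that the $L^2$ limits of $\partial_t\zeta_n$ and $\Delta\zeta_n$ coincide with the distributional derivatives of $\zeta$, since differentiation is continuous from $L^2$ into distributions. Hence they are uniquely determined, and the extension of $\Lambda$ by continuity is unambiguous. The remaining steps are routine.
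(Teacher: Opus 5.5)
Your proof is correct and takes essentially the same approach as the paper (whose proof is deferred to Section~S2): the equivalence is the definition of the distributional derivatives, with compact support removing boundary terms, and the extension to $X_0$ follows from the Cauchy--Schwarz bound $|\Lambda(\zeta)|\le C\|\zeta\|_X$ together with density. Your observation that the $L^2$ limits of $\partial_t\zeta_n$ and $\Delta\zeta_n$ coincide with the distributional derivatives correctly makes $\partial_t\zeta$ and $\Delta\zeta$ well defined on $X_0$; one small wording fix is that $G(u)$ is locally integrable because $G(u)\in L^2(\mathcal Q_T)$ is assumed, not as a consequence of $u\in L^2(\mathcal Q_T)$.
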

Section~S2 proves the lemma. It requires no classical derivative of the observed field and
asserts only the interior equation, not an initial or boundary trace condition. The actual test
functions have compact interior supports and enough regularity to belong to $X_0$: the
zero-extended polynomial $(1-\xi^2)^5$ is $C^4$. With $\zeta(\boldsymbol x,t)=\phi(\boldsymbol
x)\psi(t)$ and $G=\sum_jg_jB_j$, the lemma yields the separable moment identity
\begin{equation}
 \sum_j g_j\int B_j(u)\phi\psi\dd\boldsymbol x\dd t
 =\int u\left(\Delta\phi\,\psi+q^{-1}\phi\psi'\right)\dd\boldsymbol x\dd t,
 \label{eq:weak_identity}
\end{equation}
in which the compact supports remove boundary terms. The lemma concerns the continuum weak
operator; it does not make quadrature, finite-difference weights, or nonlinear noisy features
exact.

The test function is $(1-\xi^2)^5$ on $|\xi|\leq1$ and zero elsewhere, tensorized in space, with
two paired supports: spatial half-widths $1/16$ and $1/8$ of the unit domain against temporal
half-widths of five and eight observation intervals. Two choices matter for what follows.
Centers are retained only where the complete derivative support lies inside the observations and
the temporal average satisfies $|\bar u|<0.98$, which excludes the bulk phases where the
features carry no information; and each support block is scaled by $(kn_b)^{-1/2}$ so that the
two contribute equally to the objective regardless of how many centers each retains. The
derivative weights are fourth-order centered differences of the sampled test weights carried
with their full margins, which preserves discrete summation by parts and annihilates constant
fields to roundoff; sampling the analytic derivatives instead does not. Fast Fourier transforms
evaluate the convolutions. Section~S1 states the weights, the spatial multiplier, and the
two-cell margin.

Stacking the normalized moments gives a matrix $A$, a vector $d$, and the problem
\begin{equation}
 \widehat{\boldsymbol g}=\operatorname*{arg\,min}_{\boldsymbol g\geq\ell\boldsymbol 1}
 \frac12\|A\boldsymbol g-d\|_2^2.
\label{eq:positive_ls}
\end{equation}
For the experiments in Section~\ref{sec:component_results}, the design is formed explicitly on
the paired temporal/spatial half-supports $(5\Delta t,1/16)$ and $(8\Delta t,1/8)$ with equal
block weighting. Column-scaled QR and singular value decompositions \cite{GolubVanLoan2013} and
the active-set nonnegative least-squares algorithm \cite{LawsonHanson1974} solve
Eq.~\eqref{eq:positive_ls} without forming normal equations, which would square the condition
number of the design. The system is rejected when the scaled singular-value ratio is at or below
$10^{-10}$. The lower bound is $\ell=10^{-10}\|\boldsymbol g_{\rm unc}\|_\infty$, and a zero
primitive is rejected at calibration. We record conditioning, active constraints, and optimality
residuals. Short and sparse records use the supports specified in Section~\ref{sec:stress}.
Section~S1 describes the supplementary normal-equation implementation.

The systems are large in rows and small in columns: on the principal $96^2$ records the two
supports retain $1.54\times10^{5}$ to $4.24\times10^{5}$ moment rows against $p=m+1\leq6$
columns, and QR compaction reduces each support to $p+1$ rows before merging. Assembly costs
$O(p\,n_tN^2\log N)$ and dominates the compaction and the solve; all quantities are evaluated in
double precision.

For the 80 principal high-noise records, column-scaled degree-five condition numbers range from 371 to 470. The corresponding noise-perturbation ratios, evaluated against clean common-mask estimates, range from 23.8 to 43.0. These values make worst-case coefficient bounds uninformative for those records; they do not measure the realized force error. The principal QR/SVD implementation avoids the additional loss of accuracy associated with forming normal equations.

\begin{algorithm}[tbp]
\caption{Surface-tension-calibrated Allen--Cahn identification}\label{alg:method}
\KwIn{Observed fields, mesh and times, known $q$, supplied $\sigma_{\rm obs}>0$, predeclared
degree and test supports.}
Use the predeclared positive constitutive basis and interior test supports\;
Assemble the weak moment system in Eq.~\eqref{eq:weak_identity} with complete derivative
margins\;
Check numerical rank and solve Eq.~\eqref{eq:positive_ls}, optionally with the penalty of
Eq.~\eqref{eq:ridge}; retain its conditioning and optimality diagnostics\;
Integrate the primitive and evaluate $C_{\widehat H}$ by 64-point Gauss--Legendre quadrature\;
Determine $\widehat\eps$ and $\widehat F$ from Eq.~\eqref{eq:epsilon_from_surface_tension}\;
\KwOut{Effective force, calibrated potential and scale, and numerical diagnostics.}
\end{algorithm}
Algorithm~\ref{alg:method} uses a predeclared degree and supports; degree two is the
low-complexity benchmark. Numerical rank, active constraints, and support sensitivity accompany
the estimate, with rank failure and inadmissible calibration rejected. Surface tension enters
only after force estimation.

\subsection{Constrained estimate, shrinkage, and conditional stability}
\label{sec:stability}
Equation~\eqref{eq:positive_ls} is convex, and its solution is the $Q$-metric projection of the
unconstrained least-squares solution onto the closed convex set $C_\ell$, where $Q=A^TA$ and $A$ has full column rank. It coincides with the
unconstrained solution exactly when that solution is already feasible, never increases the
observed moment residual relative to any feasible reference, and for a fixed design is Lipschitz
in the data with constant $1/s_{\min}(A)$. These are standard facts \cite{BoydVandenberghe2004},
stated and proved as Lemma~S1 of Section~S2 in the present notation; their role is to say when
the constraint changes an estimate at all. The contraction controls the observed moment norm,
not force error across the phase interval or forward field error, and the common-feasible-set assumption fails when $\ell$ is recomputed from each noisy record, even if the design is fixed.

The constraint and ordinary shrinkage act on different parts of the problem. Positivity
restricts the feasible coefficient directions without changing $Q$ or its eigenvalues, whereas
the ridge branch
\begin{equation}
 \min_{\boldsymbol g}\ \|A\boldsymbol g-d\|_2^2+\lambda\|\boldsymbol g\|_2^2,
 \qquad\text{with or without }\boldsymbol g\geq\ell\boldsymbol1,
 \label{eq:ridge}
\end{equation}
changes the Hessian, up to the common objective factor, to $Q+\lambda I$ while leaving the
feasible set untouched. This is ordinary ridge regression \cite{HoerlKennard1970}: it shifts
every eigenvalue of $Q$ by $\lambda$ and shrinks the estimate along weakly excited directions,
at the cost of regularization bias. With the normalized design used here, a nonvanishing penalty can retain this bias as more observations become available. Because the two mechanisms
are distinct, they can be combined, and Section~\ref{sec:component_effects} measures their
contributions separately and jointly on shared weak systems.

The unconstrained branch of Eq.~\eqref{eq:ridge} is a Tikhonov-regularized weak equation-error
estimator, the class analyzed for the Cahn--Hilliard equation in \cite{BrunkEggerHabrich2023}.
Tables~\ref{tab:v10_main} and~\ref{tab:v12_ridge} compare this estimator with the constrained branch on shared moments and equal tuning budgets. A change of basis $A_B=A_PT$ likewise does not guarantee
better conditioning: for full-rank matrices and nonsingular square $T$,
$\kappa_2(A_B)\leq\kappa_2(A_P)\kappa_2(T)$ is only an upper bound, so the Bernstein basis is
used to certify the declared structure rather than to promise spectral improvement. Section~S2
gives the associated optimality conditions.

Realized observation and discretization perturbations enter through a single residual. Fix a
candidate space and a common retained row set with identical normalization, suppose $\boldsymbol
g_*$ is feasible for the realized lower bound, and write $A_{\rm obs}=A_*+\Delta A$ and $d_{\rm
obs}=A_*\boldsymbol g_*+r_{\rm app}+r_{\rm disc}+\Delta d$. If $\|\Delta A\|_2<s_{\min}(A_*)$,
then
\begin{equation}
 \|\widehat{\boldsymbol g}-\boldsymbol g_*\|_2
 \leq\frac{\|r_{\rm app}\|_2+\|r_{\rm disc}\|_2+\|\Delta d\|_2+
 \|\Delta A\|_2\|\boldsymbol g_*\|_2}
 {s_{\min}(A_*)-\|\Delta A\|_2}=:R.
 \label{eq:observed_bound_v10}
\end{equation}
Equation~\eqref{eq:observed_bound_v10} also controls calibration.

Writing $\boldsymbol P(u)=(P_0(u),\ldots,P_{p-1}(u))^T$ and $M_P=\sup_{u\in I}\|\boldsymbol
P(u)\|_2$, Cauchy--Schwarz gives $\|\widehat H-H_*\|_\infty\leq M_PR$, so
Theorem~\ref{thm:finite_calibration} applies with $D\leq M_PR$; in particular
$2\sqrt{2M_PR}<C_{H_*}$ gives a finite scale-error certificate, and if
$g_{\min,*}=\min_jg_{*,j}>0$ with $R<g_{\min,*}$ then $\eta\leq R/g_{\min,*}<1$ in
Eq.~\eqref{eq:multiplicative_envelope}. These are Corollaries~S3 and~S4 of Section~S2. The
reference may be a feasible approximation when the generating force lies outside the chosen
family, in which case its mismatch contributes to $r_{\rm app}$; noise-induced changes to the
retained mask $|\bar u|<0.98$ fall outside the common-row assumption.

\subsection{Systematic and statistical residuals}
\label{sec:residuals}
These chains are reference-dependent and deterministic, and they do not follow from $\|u_{\rm
obs}-u\|_{L^2}$ alone. Two of the three residuals are systematic rather than random. The
approximation residual $r_{\rm app}$ is a model-discrepancy term in the sense of the calibration
literature, so absorbing it into an observation-noise model biases the identified coefficients
\cite{BrynjarsdottirOHagan2014}, and $r_{\rm disc}$ plays the same role for the forward operator
\cite{KaipioSomersalo2007}; Section~\ref{sec:constitutive_family} measures the corresponding force-approximation error for one admissible law outside the positive family. This function-space error is not the same as its weak-moment residual: a nonzero force discrepancy can be small or invisible in the retained moments. The nonpolynomial laws of Section~\ref{sec:protocol} also have nonzero function-space approximation error at every finite degree. The third is statistical and specific to
weak identification with nonlinear features: for a polynomial feature $B_j^{(m)}$ and centered
Gaussian noise of variance $\sigma_n^2$,
\begin{equation*}
 \mathbb E[B_j^{(m)}(u+\xi)]
 =B_j^{(m)}(u)+\frac{\sigma_n^2}{2}(B_j^{(m)})''(u)+O(\sigma_n^4),
\end{equation*}
so averaging reduces variance without removing the bias. Weak sparse identification of nonlinear dynamics (WSINDy) distinguishes noise robustness
from unconditional consistency \cite{MessengerBortz2022Consistency}, and extending the
errors-in-variables treatment of weak-form estimation of nonlinear dynamics (WENDy) \cite{BortzMessengerDukic2023WENDy} to these overlapping
PDE moments requires additional analysis; the present ensembles establish neither calibrated
coefficient intervals nor consistency at fixed nonzero noise.

For a known Gaussian noise variance, Hermite substitution gives an exact correction to the means of polynomial features \cite{NISTHermiteGenerating}. This statement requires polynomial evaluation on the whole real line, predetermined rows, and deterministic weights. It does not apply unchanged to the exterior continuation or the observation-dependent phase mask. Section~S4 tests unmasked polynomial features with an estimated variance; unbiased feature means do not imply unbiased coefficients or validation scores. Section~\ref{sec:complexity} discusses the resulting degree selection.

\subsection{Experimental variants and comparison design}
Constrained and unconstrained branches share the same degree, moments, and weights. The ridge
penalty is the identity in Bernstein coefficient coordinates with $\lambda=\alpha\|A_{\rm
train}^{(m)}\|_F^2/(m+1)$, $\alpha\in\{0,10^{-8},10^{-6},10^{-4},10^{-2},1\}$, fixed before the
comparison was run. 
Here $A_{\rm train}^{(m)}$ is the training design in the degree-$m$ Bernstein basis. The divisor
is $p=m+1$, hence three at degree two and six at degree five, so that $\lambda/\alpha$ is the
mean squared column norm at each degree. The selected numerical $\lambda$ is retained when
refitting the full record. Each branch selects only its own $\alpha$ by the minimum held-out
validation residual (ties to the smaller $\alpha$), so the two ridge branches have equal tuning
budgets (Section~S1). The analytic-derivative weak control uses positive coefficients and
selects degree and support from the same candidates and validation moments as the discrete-derivative
selection variant. The smoothed pointwise fourth-order finite-difference (FD4) control fixes
degree two and selects only the smoothing width. Section~S1 specifies these procedures.

\section{Numerical results}
\label{sec:component_results}
\subsection{Test problems and comparison protocol}
\label{sec:protocol}
The experiments examine whether the recovered law has the prescribed phase stability, how the structural constraint affects accuracy, and how the supplied tension changes the calibrated parameters. The principal benchmark uses circular and nonconvex initial fields with four symmetric potentials: the sixth-degree reference law, a degree-three Bernstein law with potential coefficients $(0.8,3.0,0.3,1.8)$, and
\[
 F_{\rm exp}(u)=\tfrac14(1-u^2)^2e^{u^2},\qquad
 F_{\rm rat}(u)=\frac{(1-u^2)^2}{4(1+0.7u^2)}.
\]
For the Bernstein law, $F=\sum_{j=0}^3b_jP_j^{(3)}$. The exponential and rational laws are outside every finite polynomial space, allowing approximation error to be distinguished from observation error. The initial field is held fixed across laws and is not assumed to be an equilibrium profile for each potential.

The unit-square domain has $q=1$, $\eps=0.03$, grid size $N=96$, and output interval $\Delta t=5\times10^{-5}$. An independent NumPy implementation of the second-order Crank--Nicolson--Adams--Bashforth method (CNAB2) generates 101 fields with 12 internal steps per output interval. The principal study contains eight clean law--geometry records and 80 records at each noise standard deviation, $0.01$ and $0.03$. Each noisy group uses ten paired Gaussian noise seeds per law--geometry pair. Degrees, supports, and selection rules were fixed for each comparison before its runs; the ridge comparison was added after the principal records had been examined and was subsequently repeated on fresh seeds.

Figure~\ref{fig:geometries} illustrates the three initial geometries on the separate $256^2$ sixth-degree-law dataset used for the neural comparison and supplementary controls. The inclusion retains an inner and an outer interface over the displayed interval. These images illustrate geometric variation; the four-law principal benchmark uses the $96^2$ protocol above. Additional wobble-shaped-circle and dumbbell tests are reported in Section~S3.

\begin{figure}[tbp]\centering
\includegraphics[width=0.94\linewidth]{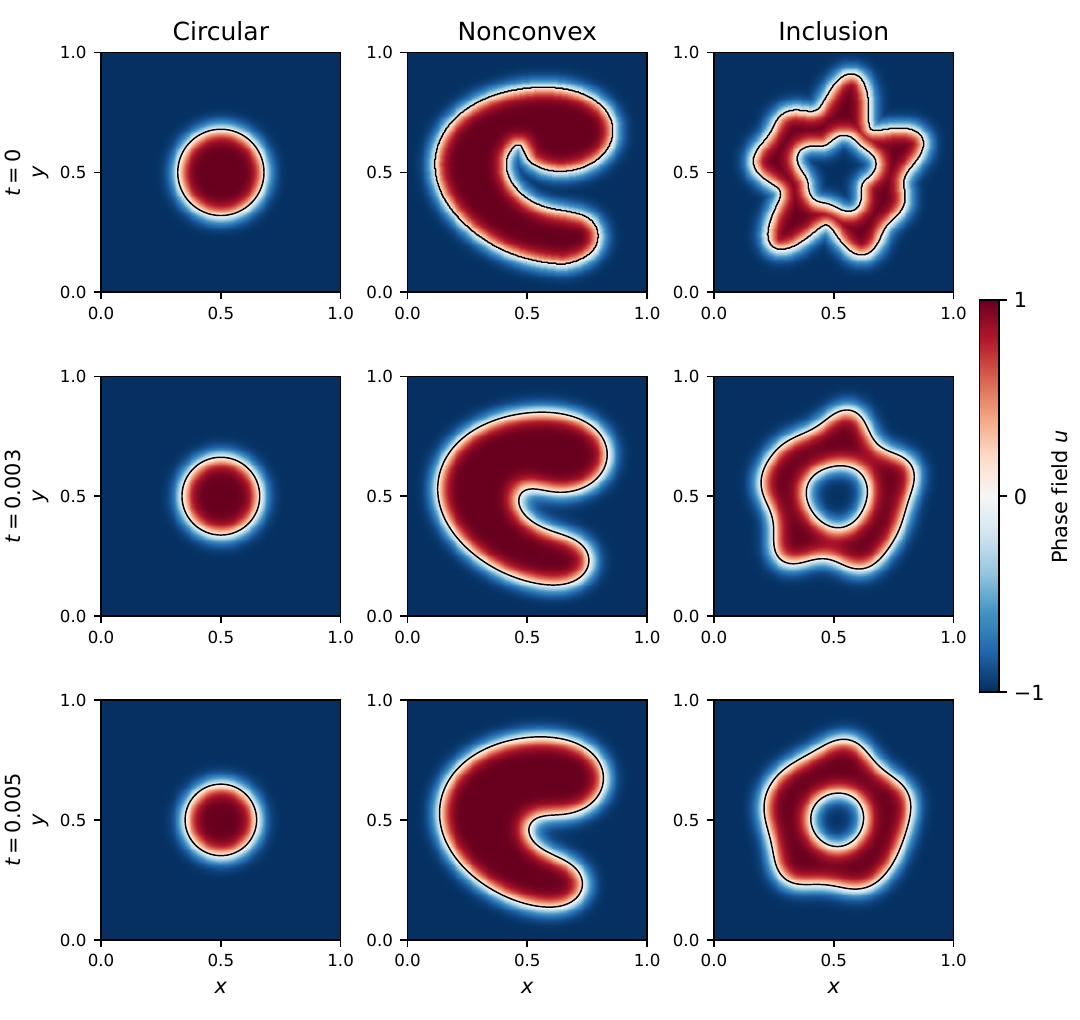}
\caption{Sixth-degree-law observations on the separate $256^2$ dataset: circular, nonconvex, and inclusion geometries at the initial time, identification endpoint, and final time. Colors show $u$ on a common scale and black contours mark $u=0$. Contours use the cell-centred coordinates of the fields. These are the neural-comparison observations, not the $96^2$ four-law principal ensemble.}\label{fig:geometries}
\end{figure}

All methods in a paired comparison use identical observations and noise realizations. Percentile bootstrap intervals \cite{EfronTibshirani1993} use 10,000 resamples of the ten seed clusters, retaining all law--geometry cases within each cluster. They summarize repeated noise on the tested cases. With only ten clusters, nominal 95\% coverage is not established; Section~S7 compares percentile, bias-corrected and accelerated, and cluster-$t$ intervals and identifies conclusions sensitive to the construction. The deterministic bounds of Section~\ref{sec:math_background} are separate from these statistical summaries.

The error measures are
\begin{equation}
\begin{aligned}
 &e_G=\frac{\|\widehat G-G_*\|_2}{\|G_*\|_2},\qquad
 e_F=\frac{\|\widehat F-F_*\|_2}{\|F_*\|_2},\qquad
 e_\eps=\frac{|\widehat\eps-\eps_*|}{\eps_*},\\
 &e_u=\frac{\|\widehat u(t_{100})-u_*(t_{100})\|_2}{\|u_*(t_{100})\|_2}.
\end{aligned}
\label{eq:new_errors_v10}
\end{equation}
Constitutive norms use 2001 uniformly spaced phase values; the separate $256^2$ comparisons use 20001. For $e_u$, both laws evolve from the same clean frame 61 to frame 100, at $t_{100}=0.005$, using a separate fourth-order exponential time-differencing Runge--Kutta solver (ETDRK4) \cite{KassamTrefethen2005}. No later observation enters this evolution. Thus $e_u$ tests the recovered force over this horizon, while $e_F$ and $e_\eps$ also depend on the supplied tension. Solver checks and failure handling are detailed in Sections~S1 and~S3. A structural violation denotes failure of the prescribed positive primitive or single interior stationary point; a negative Bernstein coefficient alone is not a violation. Failed calibration leaves $e_F$ and $e_\eps$ undefined, and failures remain in the reported counts.

\subsection{Effects of positivity and ridge regularization}
\label{sec:component_effects}
Table~\ref{tab:v10_main} separates positivity and ridge regularization on the same 80 principal records at noise $0.03$. At degree five, the mean force error decreases from 8.70\% for unconstrained least squares to 6.20\% with positivity, 7.29\% with ridge regularization, and 5.66\% with both. Positivity prevents coefficient combinations inconsistent with the prescribed wells, while ridge regularization reduces the response to weakly determined directions of the design.

\begin{table}[tbp]\centering\footnotesize
\setlength{\tabcolsep}{3.2pt}
\caption{Principal comparison at noise $0.03$ (four laws, two geometries; 80 records per method). All errors are means in percent. ``Principal seeds'' use the ten shared seeds of the benchmark; ``fresh seeds'' repeat the comparison with ten noise seeds never used in any earlier analysis (--- where not run). Positivity is never active at degree two on the principal seeds, so the constrained and unconstrained degree-two branches coincide and are reported as one row. The analytic weak control and smoothed FD4 are positive degree-two fits. ``Active'' counts constrained fits with at least one coefficient at the lower bound (--- for branches without a bound); ``Viol.'' counts recovered potentials that fail the prescribed double well. Every fit attains full numerical rank, calibrates, and completes its forward evolution.}\label{tab:v10_main}
\begin{tabular}{lrrrrrrrr}
\toprule
& \multicolumn{6}{c}{Principal seeds} & \multicolumn{2}{c}{Fresh seeds}\\
\cmidrule(lr){2-7}\cmidrule(lr){8-9}
Method & $e_G$ & $e_F$ & $e_\eps$ & $e_u$ & Active & Viol. & $e_G$ & Viol.\\\midrule
STAC $=$ weak LS, $m=2$ & 5.65 & 1.51 & 0.545 & 0.125 & 0/80 & 0/80 & 5.65 & 0/80\\
Ridge STAC $=$ ridge LS, $m=2$ & 5.75 & 1.53 & 0.533 & 0.123 & 0/80 & 0/80 & --- & ---\\
\midrule
Weak LS, $m=5$ & 8.70 & 1.63 & 0.479 & 0.111 & --- & 6/80 & 9.40 & 3/80\\
STAC, $m=5$ & 6.20 & 1.28 & 0.494 & 0.113 & 52/80 & 0/80 & 7.85 & 0/80\\
Ridge LS, $m=5$ & 7.29 & 1.46 & 0.475 & 0.112 & --- & 4/80 & 7.19 & 2/80\\
Ridge STAC, $m=5$ & 5.66 & 1.22 & 0.484 & 0.112 & 25/80 & 0/80 & 6.67 & 0/80\\
\midrule
Analytic weak & 6.69 & 1.89 & 0.251 & 0.107 & 0/80 & 0/80 & --- & ---\\
Smoothed FD4, $m=2$ & 13.1 & 2.19 & 5.48 & 0.995 & 0/80 & 0/80 & --- & ---\\
\bottomrule\end{tabular}\end{table}

The structural counts make this distinction clearer than the mean errors alone. Unconstrained degree-five regression violates the well structure in 6 of 80 records; ridge regularization reduces this count to four but does not eliminate it. Both constrained branches retain the prescribed structure in every record. The constraint is active in 52 unpenalized estimates and 25 ridge estimates. Regularization reduces the frequency with which the estimate reaches the coefficient bound, but does not replace the structural condition.

\begin{figure}[tbp]\centering
\includegraphics[width=\linewidth]{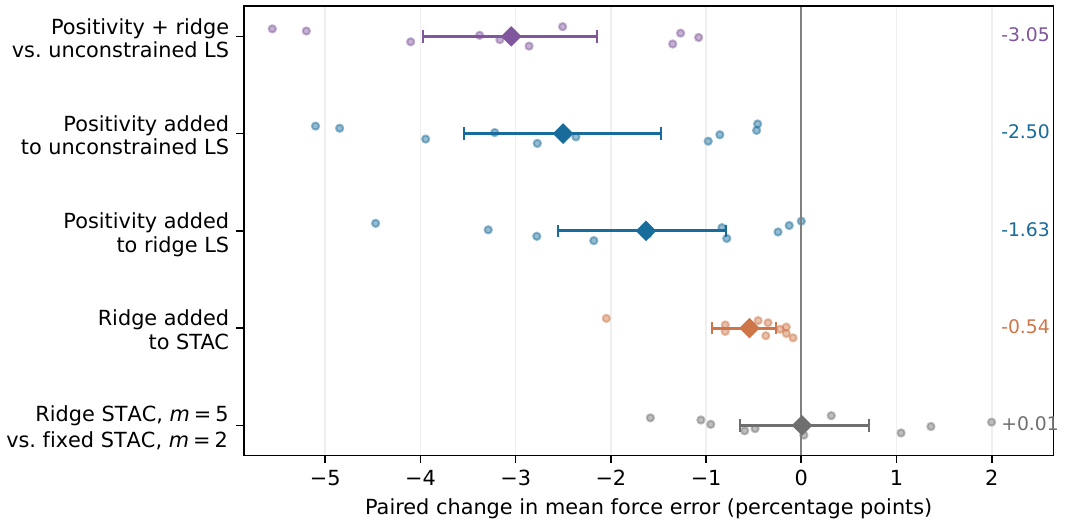}
\caption{Paired changes in mean force error at degree five on the principal records, in percentage points. Negative values favor the first-named method. Diamonds indicate means over ten seed clusters, dots show the individual cluster means, and bars show nominal 95\% percentile bootstrap intervals. The final comparison uses fixed degree-two STAC as the reference.}\label{fig:principal_ridge}
\end{figure}

The paired contrasts in Fig.~\ref{fig:principal_ridge} support both contributions on these records: adding positivity to ridge changes the mean error by $-1.63$ percentage points (pp), with interval $[-2.56,-0.79]$, and adding ridge to positivity changes it by $-0.54$~pp, $[-0.94,-0.26]$. The combined degree-five estimate is close to fixed degree two, with difference $+0.01$~pp, $[-0.64,+0.72]$. This agreement does not persist on the fresh seeds: ridge STAC at degree five gives 6.67\%, compared with 5.65\% at degree two. The fresh-seed mean improvement from adding positivity to ridge remains $-0.52$~pp, but its cluster-$t$ interval includes zero. The observed ordering is therefore more stable than a claim of statistical significance for every contrast.

Figure~\ref{fig:structure_example} illustrates the physical consequence of one structural violation. The unconstrained estimate introduces zeros near $u=\pm0.0835$ and changes the central energy maximum into a local minimum. The mixed state becomes locally metastable rather than unstable, although the overall force curve is close to the reference. This is an error in local phase stability that a small average trajectory error need not reveal. The estimate constrained on the same moments retains the required double well. The example is illustrative; its selection is specified in Section~S1.

\begin{figure}[tbp]\centering
\includegraphics[width=\linewidth]{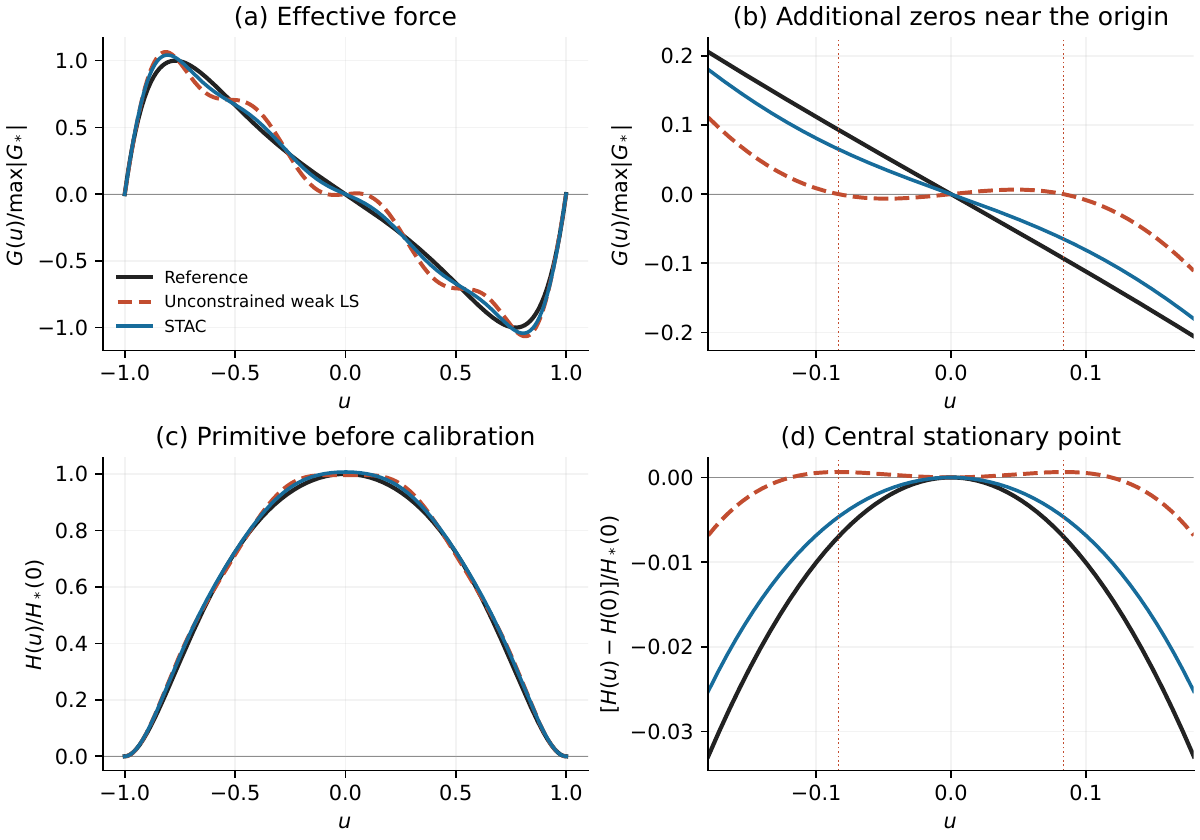}
\caption{Degree-five recovery of the exponential law from circular observations at noise $0.03$. The unconstrained estimate (dashed) and STAC (blue) use the same moments; black is the reference. Panels (a,b) show the force and an enlargement near the origin. Panels (c,d) show the primitive and its local change from $u=0$. The additional force zeros and central local minimum identify an unintended metastable mixed state.}\label{fig:structure_example}
\end{figure}

The law-by-law results explain why the two degrees can have similar pooled errors while behaving differently (Table~\ref{tab:v19_law}). Degree two has a 6.45\% approximation floor for the degree-three law, and degree-five ridge STAC reduces its observed error by 7.09~pp. For the other three laws, the degree-two floor is at most 0.60\%; additional coefficients increase the error by 1.13--3.40~pp. These changes are consistent with a bias--variance tradeoff: a richer family is useful when its reduction in approximation error exceeds the increase in sensitivity to the observations.

\begin{table}[tbp]\centering\small
\caption{Law-by-law force error on the 80 principal records at noise $0.03$ (20 records per law: two geometries, ten seeds). All entries are relative force errors in percent; $\Delta$ is Ridge STAC $m=5$ minus STAC $m=2$ in percentage points. The cone floor is the data-free best nonnegative-cone approximation of the reference force (0.00 denotes below 0.005\%).}\label{tab:v19_law}
\setlength{\tabcolsep}{5pt}\begin{tabular}{lrrrrr}\toprule Law & \shortstack{Cone floor\\$m=2$} & \shortstack{Cone floor\\$m=5$} & \shortstack{STAC\\$m=2$} & \shortstack{Ridge STAC\\$m=5$} & $\Delta$ (pp)\\\midrule
Sixth-degree & 0.00 & 0.00 & 1.25 & 3.84 & $+2.59$\\
Bernstein degree 3 & 6.45 & 0.00 & 15.90 & 8.81 & $-7.09$\\
Exponential & 0.60 & 0.00 & 4.19 & 7.59 & $+3.40$\\
Rational & 0.44 & 0.00 & 1.26 & 2.39 & $+1.13$\\
\bottomrule\end{tabular}\end{table}

At degree two, positivity is inactive on the principal records and ridge regularization changes little. Consequently, the improvement over smoothed fourth-order finite-difference regression (FD4), $-7.43$~pp with interval $[-7.52,-7.34]$, cannot be attributed to positivity; both methods use the same positive family. It concerns the weak and pointwise formulations under the tested smoothing and noise settings. The ordering reverses on the separate clean $256^2$ controls, where direct FD4 is more accurate (Section~S3). Neither result supports a universal ranking.

Finally, $e_u$ is only 0.111--0.125\% across the fixed weak branches despite their larger differences in $e_G$. The common short validation horizon and phase values visited by the trajectories make the final field less discriminating than the full constitutive curve. Agreement in $e_u$ therefore complements force error and local-stability checks.

\subsection{Replication and the effect of constitutive degree}
\label{sec:replication}
A second ensemble uses four laws, one geometry, and fresh seeds for full and short records at noise $0.01$ and $0.03$, giving 40 records per condition. In the comparison with observation-selected configurations, constrained and unconstrained estimates use the same selected moments. The constrained mean error is smaller in all four conditions, while unconstrained structural violations increase from 1 to 12 of 40 as the observations become less informative (Section~S4). Some interval conclusions depend on the construction; the short-record contrast at noise $0.01$, for example, has a cluster-$t$ interval containing zero.

\begin{table}[tbp]\centering\small
\caption{Constrained and ridge-regularized fits on the matched replication ensemble (four laws, one geometry, ten fresh seeds; 40 records per cell). Entries are mean relative force errors $e_G$ in percent, with the number of structure violations among the 40 fits in parentheses. The first data column is the \emph{constrained} STAC estimator at degree two; the remaining four columns are degree-five branches on the same moments.}\label{tab:v12_ridge}
\setlength{\tabcolsep}{4pt}\begin{tabular}{llrrrrr}\toprule
& & \multicolumn{1}{c}{$m=2$} & \multicolumn{4}{c}{$m=5$}\\
\cmidrule(lr){3-3}\cmidrule(lr){4-7}
Record & Noise & STAC & LS & STAC & Ridge LS & Ridge STAC\\\midrule
full & 0.01 & 4.83 (0) & 7.56 (2) & 6.13 (0) & 4.37 (0) & 3.83 (0)\\
full & 0.03 & 5.67 (0) & 11.67 (3) & 9.21 (0) & 10.62 (2) & 8.84 (0)\\
short & 0.01 & 4.66 (0) & 12.59 (8) & 8.85 (0) & 5.90 (2) & 5.02 (0)\\
short & 0.03 & 9.50 (0) & 25.03 (11) & 15.55 (0) & 21.85 (5) & 15.16 (0)\\
\bottomrule\end{tabular}\end{table}

Table~\ref{tab:v12_ridge} fixes degree five and adds ridge regularization on these same records. The combined estimator has the smallest mean error among the degree-five branches in each condition, and only the constrained branches avoid all structural violations. Its comparison with degree two is different: at noise $0.03$, degree-two STAC is more accurate for both record lengths. Replication thus supports the separate effects of positivity and regularization without establishing that the richer family is preferable.

The clean circular tests in Fig.~\ref{fig:degree} provide a second explanation for this behavior. The best approximation of the exponential force improves with degree, but recovery from the trajectory is most accurate at degree four and deteriorates at degree five. The increased condition number exposes coefficient combinations that the record determines poorly. These supplementary calculations use the earlier normal-equation implementation, so their high-degree behavior can also contain numerical error from forming the Gram matrix; they should not be read as a conditioning test of the principal QR/SVD solver.

\begin{figure}[tbp]\centering
\includegraphics[width=\linewidth]{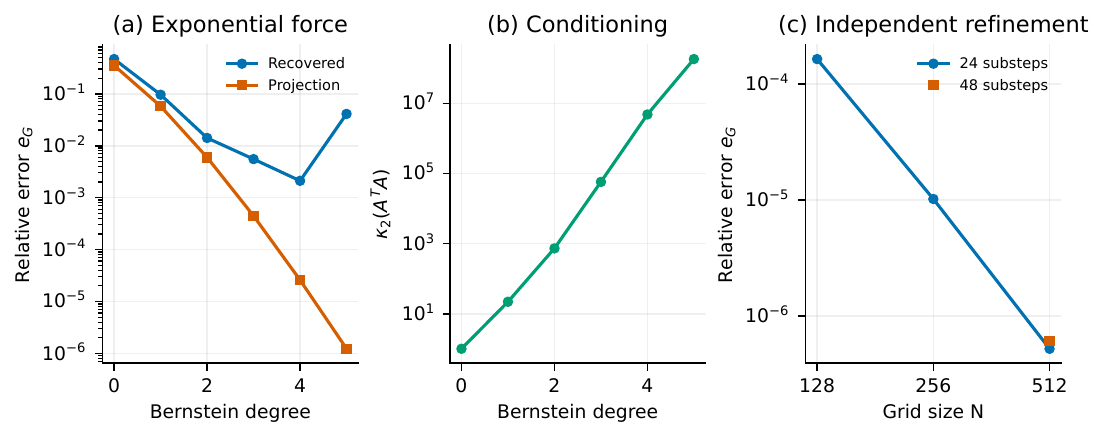}
\caption{Supplementary clean-data tests. (a) Exponential-force recovery and best nonnegative approximation against Bernstein degree. (b) Condition number of the weak Gram matrix in the earlier normal-equation implementation used for these records. (c) Sixth-degree-law recovery under spatial refinement with a common initial field and 24 or 48 CNAB2 substeps. These calculations are separate from the principal QR/SVD comparisons.}\label{fig:degree}
\end{figure}

For the sixth-degree law, spatial refinement reduces the clean force error from 0.0163\% at $N=128$ to $5.3\times10^{-5}$\% at $N=512$. This indicates a small discretization contribution in that resolved, clean test. It does not by itself separate discretization and observation errors for every law or establish consistency at fixed nonzero noise.

\subsection{Recovery when the true law is outside the positive family}
\label{sec:cone_trajectory}
The nonpolynomial benchmark laws have small approximation errors at the tested degrees. The law in Eq.~\eqref{eq:counterexample} provides a more demanding test: it has the required symmetric double well and belongs to the unconstrained polynomial space, but its Bernstein coefficients lie outside the nonnegative cone at degrees two and five. Its amplitude is chosen to match the resolved interface width of the reference law. This comparison tests a restriction on an otherwise adequate candidate space.

\begin{table}[tbp]\centering\footnotesize
\setlength{\tabcolsep}{3pt}
\caption{Trajectory recovery of the admissible law of Eq.~\eqref{eq:counterexample}, which lies
outside the nonnegative cone at both degrees. Relative force error is the mean and sample standard deviation over both geometries: two deterministic estimates for clean data and 20 estimates from two geometries and ten paired noise seeds otherwise; both branches
use identical weak moments. ``Floor'' is the data-free cone approximation limit of
Fig.~\ref{fig:v12_approximation}, and the next column is the constrained error as a multiple of it.
The last column counts how often unconstrained least squares already lands inside the cone.}
\label{tab:cone_trajectory}
\begin{tabular}{@{}rrrrrrrrrr@{}}\toprule
$m$ & noise & unconstrained & constrained & cost & floor & con./floor
& \multicolumn{2}{c}{admissible} & uncon.\\
\cmidrule(lr){8-9}
 & & $e_G$ (\%) & $e_G$ (\%) & [pp] & (\%) & & uncon. & con. & in cone\\\midrule
2 & 0.00 & $0.45\pm0.02$ & $60.82\pm0.12$ & $+60.38$ & 37.68 & 1.61 & 2/2 & 2/2 & 0/2\\
2 & 0.01 & $2.64\pm0.19$ & $61.31\pm0.14$ & $+58.67$ & 37.68 & 1.63 & 20/20 & 20/20 & 0/20\\
2 & 0.03 & $77.80\pm0.82$ & $77.80\pm0.82$ & $+0.00$ & 37.68 & 2.06 & 20/20 & 20/20 & 20/20\\
5 & 0.00 & $1.57\pm0.29$ & $6.01\pm0.01$ & $+4.44$ & 5.55 & 1.08 & 2/2 & 2/2 & 0/2\\
5 & 0.01 & $13.15\pm2.05$ & $6.93\pm0.07$ & $-6.22$ & 5.55 & 1.25 & 4/20 & 20/20 & 0/20\\
5 & 0.03 & $10.00\pm2.88$ & $27.89\pm0.23$ & $+17.89$ & 5.55 & 5.03 & 20/20 & 20/20 & 0/20\\
\bottomrule\end{tabular}\end{table}

At degree two, imposing positivity increases the clean force error from 0.45\% to 60.82\%, and the error at noise $0.01$ from 2.64\% to 61.31\%. The unconstrained estimates are already structurally admissible in these 22 records. The constraint introduces a large bias without a structural benefit. Moreover, the constrained errors exceed the 37.68\% function-space approximation floor: weak moments weight the phase values visited by the trajectory differently from the uniform phase-space norm used to define that floor.

At degree five and noise $0.01$, the balance reverses. The constrained error is 6.93\%, compared with 13.15\% without the constraint, and 16 of 20 unconstrained estimates violate the prescribed structure. Four constrained coefficients reach the lower bound in every record, suppressing large oscillations in the unconstrained coefficients. The approximation floor is now only 5.55\%, so the restriction can reduce noise sensitivity enough to compensate for excluding the exact law.

This benefit is not determined by the approximation floor alone. At noise $0.03$, with the same degree-five floor, the constrained error rises to 27.89\% and exceeds the unconstrained error of 10.00\%. Nonlinear feature bias and changes in the observation-dependent mask alter the weak system as the noise increases. At degree two, both estimates coincide at this noise level because the unconstrained coefficients are already positive, despite a force error of 77.80\%. An inactive constraint therefore provides no evidence that the candidate family accurately describes the underlying law.

These results separate physical admissibility from approximation quality. Positivity guarantees the declared well structure, but its usefulness for recovery depends jointly on the degree, trajectory, and observation error. A reference-based approximation floor explains the synthetic tests; it is unavailable as a selection criterion when the constitutive law is unknown.

\subsection{Limits of observation-based degree selection}
\label{sec:complexity}
The pooled principal and fresh-seed results favor a low degree, whereas the degree-three law benefits from a richer family. The choice is therefore not resolved by a pooled accuracy ranking. It requires a criterion that recognizes approximation error from the observations without giving excessive weight to noise-sensitive coefficients.

The exploratory selection rule does not achieve this consistently. Selecting degree and support from disjoint validation blocks improves the mean force error at noise $0.01$ from 4.81\% to 2.90\%, but increases it at noise $0.03$ from 5.65\% to 6.78\%. For the degree-three law, it selects degree one in all 20 high-noise records, although the law-by-law comparison shows a benefit from additional coefficients. A small validation residual is thus insufficient to identify the most accurate constitutive representation under these conditions.

Correcting polynomial feature means does not resolve the problem. On predetermined unmasked rows, the Hermite comparison of Section~S4 reduces the degree-two mean force error from 10.72\% to 8.19\%, but changes the selected-degree error by only $-0.05$~pp, with interval $[-1.30,+1.28]$. This is an estimated-variance implementation of an exact identity for known Gaussian variance; the identity does not imply unbiased coefficients or validation scores. The limited improvement in selection suggests that feature-mean bias is only one source of error.

The present implementation therefore prescribes the degree. This is a practical conclusion about the tested criteria and observation regime, not a proof that constitutive complexity cannot be inferred from data. Numerical rank, support sensitivity, and phase coverage remain useful diagnostics, but reliable adaptive degree selection requires further study.

\subsection{Effect of observation quality}
\label{sec:stress}
Six additional conditions change the information available for estimation: short records, sparse sampling in time, a $48^2$ grid, spatially correlated noise, and interfaces with $\eps=0.02$ or $0.05$. Each uses two laws and ten paired seeds at noise $0.03$, for 120 records in total. Supports are shortened when necessary, and fitting and validation frames remain disjoint. Section~S1 specifies the filtering, support sizes, and interface resolutions.

\begin{figure}[tbp]\centering
\includegraphics[width=\linewidth]{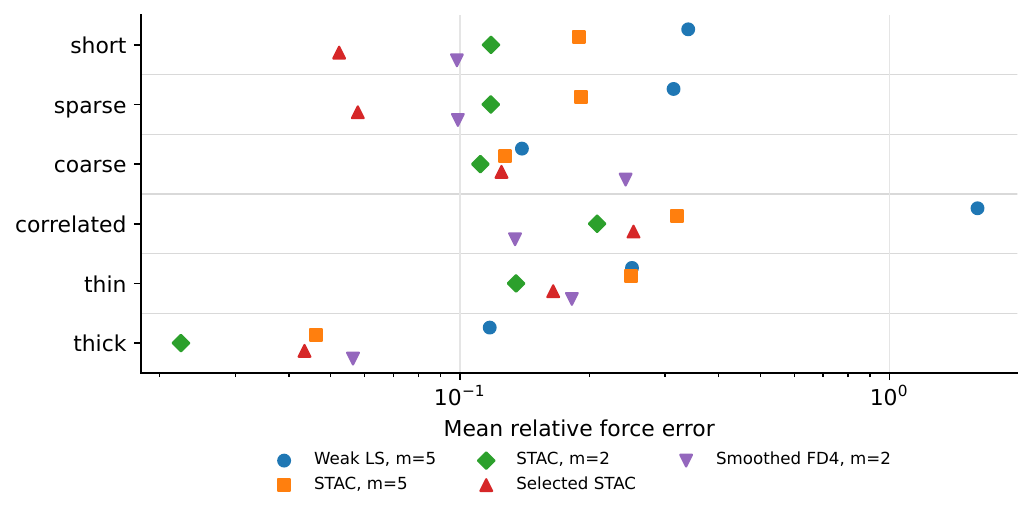}
\caption{Mean relative force error under six observation conditions, with two laws and ten seeds per condition. All force estimates are included, including structurally invalid unconstrained estimates. The selected STAC branch uses the exploratory observation-based degree and support rule.}\label{fig:v10_stress}
\end{figure}

Coarse observations and the thinner interface raise the fixed degree-two STAC error to 11.2\% and 13.5\%, respectively, compared with 2.7\% in the reference configuration. Both changes reduce the number of samples across the transition layer. The correlated-noise results in Fig.~\ref{fig:v10_stress} also show why averaging alone is insufficient: neighboring samples no longer provide independent fluctuations, and increasing the degree can amplify the remaining errors.

\begin{table}[tbp]\centering\small
\caption{Constraint activation and structural violations over the 120 observation-stress records. Activation is not applicable to unconstrained methods, denoted by a dash. The selected branches use the exploratory degree and support rule.}\label{tab:stress_counts}
\begin{tabular}{lrr}\toprule
Method & Active constraint & Structure violation\\\midrule
Weak LS, $m=2$ & --- & 41/120\\
STAC, $m=2$ & 41/120 & 0/120\\
Weak LS, $m=5$ & --- & 55/120\\
STAC, $m=5$ & 113/120 & 0/120\\
Selected weak LS & --- & 34/120\\
Selected STAC & 34/120 & 0/120\\
\bottomrule\end{tabular}\end{table}

The constraints preserve the prescribed structure even when recovery becomes inaccurate (Table~\ref{tab:stress_counts}). Unconstrained degrees two and five violate it in 41 and 55 of 120 records; degree five also fails calibration in three records. No constrained estimate violates the well structure. At still larger noise, $0.05$ and $0.10$, fixed-degree STAC force errors reach approximately 13\% and 25--33\%, while retaining admissibility (Section~S4). These are limitations in quantitative recovery, not evidence that the constraints recover information absent from the record.

Phase coverage helps explain where the errors occur. Weighting the force error by the observed phase histogram reduces the principal degree-five means from 8.70\% to 6.44\% without constraints and from 6.20\% to 4.77\% with constraints. The largest discrepancies are therefore concentrated partly in phase values sampled less often. Reporting only a trajectory-weighted error would conceal this limitation when the law is used with a different initial field.

\subsection{Comparison with neural constitutive estimates}
\label{sec:published}
Table~\ref{tab:published} compares STAC with the SPINN and PFWNN adaptations described in Section~S6. This is a separate $256^2$ sixth-degree-law study at noise $0.01$, with one measurement realization per geometry and three network initializations. The networks and STAC receive identical observations before conversion to each method's numerical precision. Terminal networks are used throughout.

\begin{table}[tbp]\centering\small
\setlength{\tabcolsep}{3pt}
\caption{STAC and adaptations of two published neural inverse methods on identical observations ($256^2$ records of the sixth-degree law, noise $0.01$, one measurement seed; three geometries). Entries are ranges over the three geometries; neural entries are means over three network initializations. Errors are relative and in percent. ``Raw'' is the network force as produced and ``projected'' its projection onto the positive family of Section~\ref{sec:constitutive_family}; potential errors are reported after projection. Details are given in Section~S30.}\label{tab:published}
\begin{tabular}{lcrrrr}
\toprule
Method & \shortstack{Admissible\\as produced} & \shortstack{$e_G$ (\%)\\raw} & \shortstack{$e_G$ (\%)\\projected} & \shortstack{$e_F$ (\%)\\projected} & Time\\\midrule
STAC & yes & \multicolumn{2}{c}{0.218--0.363} & 0.065--0.090 & 1.2--1.3~s CPU\\
SPINN \cite{Zhao2020SPINN} & no & 2.64--5.45 & 0.200--0.252 & 0.053--0.068 & \multirow{2}{*}{165--382~s GPU}\\
PFWNN \cite{Bao2025PFWNN} & no & 3.77--13.2 & 2.29--12.9 & 0.598--3.75 & \\
\bottomrule\end{tabular}\end{table}

The unconstrained neural forces do not satisfy the prescribed double-well conditions in these runs. Projection onto the common positive degree-two family both enforces those conditions and changes the constitutive error. For SPINN, projection reduces $e_G$ from 2.64--5.45\% to 0.200--0.252\% across geometries, making it comparable to, and in some cases more accurate than, STAC. This large change shows that postprocessing supplies substantial structural information; projected results should not be attributed to the network alone. PFWNN has larger errors under the reported training budget.

STAC requires approximately 1.2--1.3~s of CPU identification, compared with 165--382~s of GPU training for the neural methods. These measurements describe the present implementations and different hardware, rather than a general speedup factor. The comparison does not imply that every neural constitutive method needs this particular projection: admissibility could instead be imposed through its representation or objective. Three initializations on one noise realization quantify optimization variability, not performance over measurement noise.

\subsection{Interpretation of the calibrated potential and scale}
\label{sec:calibration_results}
Calibration can give different error levels for the potential and interfacial scale because the latter depends only on the integral $C_{\widehat H}$. Figure~\ref{fig:calibrated} shows this distinction for the four principal laws. Shape errors remain visible in $\widehat F$ even when their contribution to the calibration integral is small. The signed degree-two scale bias changes from $-0.0615\%$ at noise $0.01$ to $-0.5446\%$ at noise $0.03$, approximately the ninefold increase associated with a squared-noise term. This is consistent with a contribution from nonlinear feature bias, but does not isolate that mechanism. Signed bias, mean absolute error, and root-mean-square error are reported separately in Section~S5.

\begin{figure}[tbp]\centering
\includegraphics[width=\linewidth]{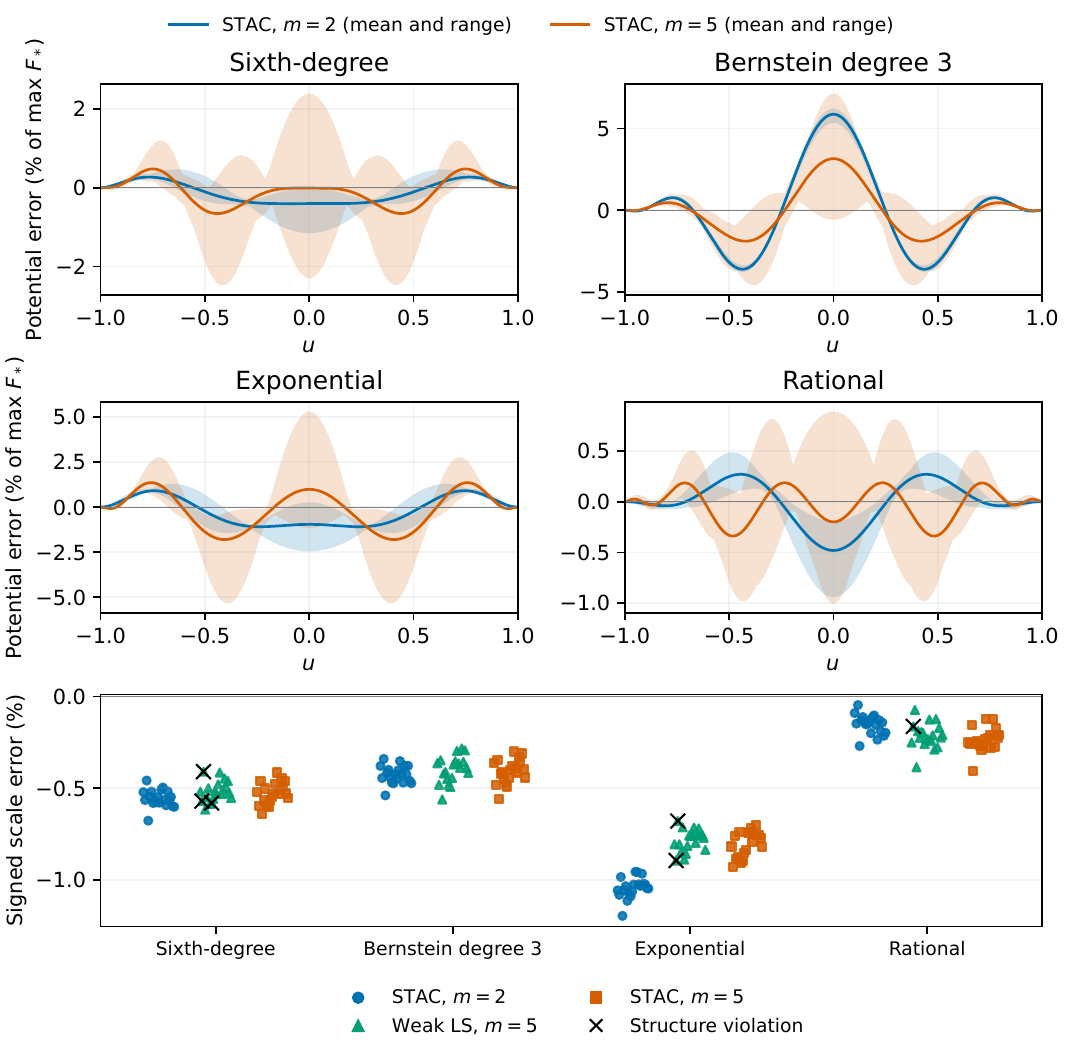}
\caption{Calibrated outputs on the principal records at noise $0.03$. First two rows: potential deviation in percent of $\max F_*$, with means and ranges over two geometries and ten seeds per law; vertical scales differ between panels. Bottom: signed relative scale error for each estimate. Crosses identify unconstrained estimates with structural violations. A small scale error can coexist with a larger error in potential shape.}\label{fig:calibrated}
\end{figure}

An independent one-dimensional equilibrium calculation checks the effect of numerically supplying the tension. Its finest mesh gives relative tension errors of $(1.7$--$2.6)\times10^{-4}\%$, with negligible changes in the reported calibrated errors. An under-resolved calculation gives tension errors of 0.61--0.93\% and increases the mean scale and potential errors to 1.24\% and 2.30\%. The significance is the relative size of the two error sources: an accurate equilibrium calculation contributes little, whereas an inaccurate datum can dominate an otherwise accurate force estimate. Since both calculations use the reference potential, they check numerical consistency rather than independent experimental calibration.

A separate sensitivity test gives each of the 80 field realizations at noise $0.01$ 200 independent, mean-one lognormal tension factors. It uses the exploratory observation-selected estimator and therefore has a different baseline from fixed-degree STAC. Increasing the tension coefficient of variation to 10\% raises the root-mean-square scale and potential errors from 0.0557\% and 1.45\% to 10.3\% and 21.1\%, respectively. The conditional tension draws do not increase the number of independent field realizations.

For fixed $\widehat H$, a factor $s$ in the supplied tension changes $\widehat\eps$ by $s$ and $\widehat F$ by $s^2$ exactly. The approximately 0.1\% crossover reported for this selected-estimator test is consequently specific to its identification error, not a required accuracy for all surface-tension measurements. Published interfacial energies can depend on thermodynamic state, orientation, and the model used to compute them \cite{DiPasquale2025}. Values from different calculations should not be interpreted as a measurement confidence interval. An experimental application must supply a physically matched datum in consistent units and propagate its uncertainty; no material-specific uncertainty is inferred from the present synthetic study.

\Needspace{30\baselineskip}
\section{Conclusions}
\label{sec:conclusions}
The central result is a separation of information in the Allen--Cahn inverse problem. Resolved trajectories determine the effective force $G=F'/\eps^2$, but they cannot determine $F$ and $\eps$ individually. STAC estimates $G$ from space--time weak moments and then uses an independently supplied surface tension to select the physical scale. The scale-equivalence result makes this limitation explicit, while the calibration theorem and perturbation bounds show when the selected pair is unique and how errors in both the recovered primitive and the supplied tension affect it.

The numerical comparisons clarify the roles of the two regularizing mechanisms. Bernstein positivity preserves the prescribed phase stability, whereas ridge regularization controls poorly determined coefficient directions. On the shared degree-five ensemble, their combination removes all six structural violations and reduces the mean force error from 8.70\% to 5.66\%. This improvement is not uniform: degree two is more accurate in several tests, and the outside-family example shows that a structural constraint can increase error through approximation bias. The constraint should therefore encode credible constitutive information rather than serve as a general guarantee of accuracy.

The present results are limited to resolved synthetic phase fields, known diffusion, and symmetric equal-depth wells. Applications to experimental data will also require uncertainty estimates for the measured surface tension, since agreement with a trajectory cannot validate the absolute energy scale. Reliable degree selection, confidence intervals, and extensions to incomplete observations and broader potential classes remain subjects for further study.

\section*{Acknowledgments}
This work was supported by the National Research Foundation of Korea(NRF) grant funded by the Korea government(MSIT) (No. RS-2026-25589376).

\begingroup
\setstretch{1}
\footnotesize
\setlength{\bibsep}{0pt plus 0.2ex}
\bibliographystyle{elsarticle-num}
\bibliography{references_v24}
\endgroup
\end{document}